\newtheorem*{theorem*}{Theorem A}
\newtheorem*{theorem**}{Theorem B}
\newtheorem{theorem}{Theorem}[section]
\newtheorem{corollary}[theorem]{Corollary}
\theoremstyle{definition}
\theoremstyle{remark}
\newtheorem{remark}[theorem]{Remark}
\newcommand{\RNum}[1]{\uppercase\expandafter{\romannumeral #1\relax}}
\newcommand{\Tor}{\ensuremath{\mathrm{Tor}}}
\newcommand{\ko}{\Bbbk}
\DeclareMathOperator{\Var}{Var}
\DeclareMathOperator{\Cov}{Cov}
\newcommand{\zk}{\mathcal Z_K}
\newcommand{\Pb}{\mathbb{P}}
\newcommand{\Zb}{\mathbb{Z}}
\newcommand{\FF}{\mathcal{F}}
\newcommand{\ZZ}{\mathcal{Z}}
\newcommand{\E}{\mathbb{E}}
\title{The Law of Large Numbers for the bigraded Betti numbers of the random moment-angle complex}
\author{Djordje Barali\'{c} \and Vlada Limic}
\address{\scriptsize{ Mathematical Institute SASA, Knez Mihajlova 36, p.p. 367, 11001, Belgrade, Serbia }}
\email{djbaralic@mi.sanu.ac.rs}
\address{\scriptsize{IRMA, UMR 7501 de l'Universit\'e de Strasbourg et du CNRS, 7 Rue Ren\'e Descartes, 67084, Strasbourg  Cedex, France}}
\email{vlada@math.unistra.fr}
\begin{document}

\maketitle

Let $(\Omega, \FF, \Pb)$ be a probability space. Let $n$ be a positive integer and $p\in [0,1]$.
 The {\em random simplicial $d$-complex} $Y_{n;p}^d$ on $n$ vertices and with parameter $p$, introduced by Linial and Meshulam in \cite{LiMes}, has the following probability law:\\
 - $Y_{n;p}^d$ takes values in the space of all simplicial complexes $K$ such that $\Delta_{d-1}^n \subset K \subset \Delta_d^n$\\
 - each possible $d$-simplex of $\Delta_d^n$ appears in $Y_{n;p}^d$ with probability $p$, independently.\\
 Recall that $\Delta_i^n$ denotes the $i$-skeleton of the standard $(n-1)$-simplex $\Delta^n$ on $n$ vertices.
 From now on we denote by $[n]$ the set of vertices, and by $Y^d(n,p)$ the law of $Y_{n;p}^d$.  
 The random simplicial $1$-complex is the well known {\em Erd\H{o}s-R\'enyi-Stepanov} random graph with law typically denoted by $G(n, p)$.
   
\textit{The moment-angle-complex} $\ZZ_K$ of a simplicial complex $K$ on $[n]$ is a topological space
$\ZZ_K :=\bigcup_{\sigma \in K} W(\sigma)$ where
$W(\sigma)=\prod_{i=1}^n V_i$, and where
$V_i=\begin{cases}
 D^2 & \mbox{if\,}  i\in\sigma\\
  S^1 & \mbox{if\,}  i\not\in\sigma,
\end{cases}$. Recall that $D^2=\{z\in \mathbb{C}| |z|\leq 1\}$ and $S^1=\{z \in \mathbb{C}| |z|=1\}$.
By definition, $X(\emptyset)=(S^1)^n$.

Cohomology ring of $\ZZ_K$ over $\ko$, where $\ko$ is a field or the ring of integers $\Zb$, was obtained by Buchstaber and Panov \cite{BuPan}, and Baskakov in \cite{Bask}. In particular, they proved that $H^{\ast, \ast} \left (\ZZ_K; \ko \right)\cong \Tor_{\ko [v_1, \dots, v_n]} (\ko[K],\ko)$, where $\ko [v_1, \dots, v_n]$ is the polynomial algebra with grading ${\rm deg}v_i=2$, and $\ko[K]$ is the {\em Stanley-Reisner ring} of $K$.
The latter object is the quotient graded ring
$\ko [v_1, \dots, v_n] / \mathcal{I}_K$, where $\mathcal{I}_K=\left(v_{i_1}\cdots v_{i_k} \left |\right. \{ i_1,\dots,i_k\} \not\in K\right)$. In words, $\mathcal{I}_K$  is the ideal generated by the square-free monomials corresponding to non-simplices of $K$.

Additive structure of $H^*(\zk;\ko)$ can thus be obtained using a well-known result from combinatorial commutative algebra, {\em{the Hochster's formula}}, which represents the above Tor-algebra as a direct sum of reduced simplicial cohomology groups of all full subcomplexes in $K$.
Hochster's formula states that
$$
\Tor^{-i,2j}_{\ko[v_{1},\ldots,v_{n}]}(\ko[K];\ko) \cong  \bigoplus_{J\subset [n], |J|=j}
\widetilde{H}_{|J|-i-1} (K_J;\ko),
$$
where $K_J$ is the \textit{full subcomplex}  of  $K$ on $J\subset [n]$.
Due to \cite{BuPan} this implies that $H^{l}\left (\mathcal{Z}_K; \ko \right ) \cong  \bigoplus_{J\subset [n]}\widetilde{H}_{l-|J|-1} (K_J;\ko)$.

\emph{The bigraded Betti numbers of $\ko (K)$} are the ranks of the bigraded components of Tor-groups 
$\beta^{-i, 2j} (\ko(K)):=\dim_{\ko} \Tor^{-i,2j}_{\ko[v_{1},\ldots,v_{m}]}(\ko[K];\ko)$. As a corollary we have
$\beta^{-i, 2j}(\ko(K))= \sum\limits_{J\subset [m], |J|=j} \dim_{\ko} \widetilde{H}_{j-i-1} (K_J;\ko)$.

In this work we introduce the {\em random moment-angle complex}
as $\ZZ_{Y_{n;p}^d}$, where $Y_{n;p}^d$ is as above.
We study the asymptotics of the ranks of the bigraded homology groups of $\ZZ_{Y_{n;p}^d}$, which due to the results given in the above paragraphs correspond to the bigraded Betti numbers
$\beta^{-i, 2j}(\ko(Y_{n;p}^d))$.

For fixed integers $i,j$
and $J\subset [n]$ such that $|J|=j$ 
let us denote
$$
 X_{J,i}^{(n,p)} \equiv X_{J,i}:= \dim_{\ko} \widetilde{H}_{|J|-i-1} (Y_{n;p}^d;\ko).
 $$
 Using the Hochster formula we get
\begin{equation}
\label{E:hoch concrete}
\beta^{-i, 2j}(\ko(Y_{n;p}^d)):= \sum_{J\subset [n], |J|=j} X_{J,i}.
\end{equation}
Due to dimensional reasons $\beta^{-i, 2j}(\ko(Y_{n;p}^d))=0$ for all $j\leq d$ and $j>n$.
In addition, $\beta^{-i, 2j}(\ko(Y_{n;p}^d))=0$ for
all $i,j$ such that $i\not\in\{j-d,j-d-1\}$ because $\widetilde{H}_{\ast}(Y_{n;p}^d;\ko)$ is trivial for $\ast\neq d-1, d$.

\begin{theorem}\label{t1} If $d+1\leq j\leq n$  then for each fixed $p\in (0,1)$ as $n\to \infty$
$$
\frac{\beta^{-j+d, 2j}(\ko(Y_{n;p}^d))}{{n \choose j}} \to f_j(p) \quad \text{and} \quad
\frac{\beta^{-j+d+1, 2j}(\ko(Y_{n;p}^d))}{{n \choose j}} \to g_j(p)
$$
almost surely, where $f_j$ and $g_j$ are both polynomials in $p$ of degree at most ${j \choose d+1}$. In fact, $f_j (p)=\E (\dim_{\ko} \widetilde{H}_{d-1} (Y_{j;p}^d;\ko)) $ and $g_j (p)=\E (\dim_{\ko} \widetilde{H}_{d} (Y_{j;p}^d;\ko))$.
\end{theorem}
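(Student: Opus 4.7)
By the Hochster-formula identity~\eqref{E:hoch concrete}, $\beta^{-i,2j}(\ko(Y_{n;p}^d))$ is a sum of $\binom{n}{j}$ (dependent but identically distributed) random variables $X_{J,i}$. For the two values of $i$ relevant here ($i=j-d$ and $i=j-d-1$) I would first identify the mean: since the full subcomplex $(Y_{n;p}^d)_J$ is determined by the independent Bernoulli$(p)$ indicators of the $\binom{j}{d+1}$ potential $d$-simplices on $J$, we have $(Y_{n;p}^d)_J \stackrel{d}{=} Y_{j;p}^d$, and therefore
$$\E\!\left(\frac{\beta^{-i,2j}(\ko(Y_{n;p}^d))}{\binom{n}{j}}\right) \;=\; \E\dim_{\ko} \widetilde{H}_{j-i-1}(Y_{j;p}^d;\ko),$$
which equals $f_j(p)$ when $j-i-1=d-1$ and $g_j(p)$ when $j-i-1=d$. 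The polynomial-degree bound follows at once from the general identity $\E F(\xi_1,\ldots,\xi_m) = \sum_{\omega \in \{0,1\}^m} F(\omega)\,p^{|\omega|}(1-p)^{m-|\omega|}$ for bounded functionals of $m$ i.i.d.\ Bernoulli$(p)$ variables; here $m = \binom{j}{d+1}$, so $f_j$ and $g_j$ are polynomials in $p$ of degree at most $\binom{j}{d+1}$.

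To upgrade the equality of means to almost sure convergence I would use a second-moment argument: bound the variance of $\beta^{-i,2j}$, apply Chebyshev's inequality, then invoke the first Borel--Cantelli lemma. The key observation is that $X_{J,i}$ is a function only of those Bernoulli indicators attached to $d$-simplices whose vertex set lies in $J$, so $X_{J,i}$ and $X_{J',i}$ are \emph{independent} whenever $|J \cap J'| \leq d$ (no $(d+1)$-subset of $[n]$ can then lie simultaneously in $J$ and $J'$). For the remaining pairs with $|J\cap J'|=k\geq d+1$, the deterministic bound $X_{J,i} \leq \binom{j}{d+1}$ yields $|\Cov(X_{J,i},X_{J',i})| \leq \binom{j}{d+1}^2$. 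Since the number of ordered pairs $(J,J')$ of $j$-subsets of $[n]$ with $|J\cap J'|=k$ equals $\binom{n}{j}\binom{j}{k}\binom{n-j}{j-k} = O(n^{2j-k})$, summing over $k \in \{d+1,\ldots,j\}$ the dominant term ($k=d+1$) gives $\Var(\beta^{-i,2j}) = O(n^{2j-d-1})$, hence
$$\Var\!\left(\frac{\beta^{-i,2j}(\ko(Y_{n;p}^d))}{\binom{n}{j}}\right) \;=\; O(n^{-d-1}).$$

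Since $d\geq 1$, this bound is summable in $n$, so Chebyshev's inequality combined with the first Borel--Cantelli lemma yields the claimed a.s.\ convergence after intersecting the resulting full-measure events over a countable sequence $\varepsilon \downarrow 0$. The principal technical obstacle is the correlation structure of the family $\{X_{J,i}\}_{|J|=j}$; the clean dichotomy between independence for $|J \cap J'| \leq d$ and deterministic boundedness otherwise is what makes this direct Chebyshev--Borel--Cantelli route succeed, obviating the need for Azuma-type concentration inequalities or a Doob-martingale decomposition.
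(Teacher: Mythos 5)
Your proposal is correct and follows essentially the same route as the paper: identifying the mean via the distributional identity $(Y_{n;p}^d)_J \stackrel{d}{=} Y_{j;p}^d$, decomposing the variance, exploiting independence of $X_{J,i}$ and $X_{J',i}$ when $|J\cap J'|\leq d$, counting the overlapping pairs to get $\Var = O(n^{2j-d-1})$, and concluding by Chebyshev and Borel--Cantelli. The only cosmetic difference is that you bound the covariances by the deterministic bound $X_{J,i}\leq \binom{j}{d+1}$ rather than by the maxima of the covariance polynomials on $[0,1]$, which changes nothing in the argument.
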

\begin{proof} Let us observe that for all $J\subset [n]$, the random variables $X_{J,i}$ and $\beta^{-i, 2j}(\ko(Y_{j;p}^d)))$ have the same probability distribution. The Hochster formula implies that $$\E (\beta^{i, 2j}(\ko(Y_{n;p}^d)))={n \choose j} \E (\dim_{\ko} \widetilde{H}_{j-i-1} (Y_{j;p}^d;\ko)).$$ As $i, j$ are fixed, $\E (\dim_{\ko} \widetilde{H}_{j-i-1} (Y_{j;p}^d;\ko))$ is clearly a polynomial in $p$ with degree at most ${j \choose d+1}$. 

We present here only the proof for $i=j-d$, since the proof for $i=j-d-1$ is analogous. Due to Markov's or Chebyshev's inequality, for any $\varepsilon>0$ $$\Pb \left(\left| \frac{\beta^{-j+d, 2j}(\ko(Y_{n;p}^d))}{{n \choose j}}-f_j (p)\right|\geq \varepsilon\right)\leq \frac{\Var \left(\beta^{-j+d, 2j}(\ko(Y_{n;p}^d))\right)}{{{n \choose j}}^2 {\varepsilon}^2 }.$$
Again from \eqref{E:hoch concrete}, we have that $$\Var \left(\beta^{-j+d, 2j}(\ko(Y_{n;p}^d))\right)=\sum_{\substack{J\subset [n]\\ |J|=j}} \Var (X_{J, j-d})+\sum_{\substack{J_1, J_2\subset [n]\\ |J_1|=|J_2|=j\\ J_1\neq J_2}} \Cov (X_{J_1, j-d}, X_{J_2, j-d}). $$ As $\Var ( X_{J, j-d})=\Var (\dim_{\ko} \widetilde{H}_{d-1} (Y_{j;p}^d;\ko))$, we get $$\sum_{\substack{J\subset [n]\\ |J|=j}} \Var (X_{J, j-d})={n \choose j} \Var (\dim_{\ko} \widetilde{H}_{d-1} (Y_{j;p}^d;\ko))={n \choose j} a (p),$$ where $a (p)$ is a polynomial in $p$ of degree at most ${{j \choose d+1}}^2$.

Due to the definition of the random simplicial complex we have that $\Cov (X_{J_1, j-d}, X_{J_2, j-d})=0$ when $|J_1 \cap J_2|\leq d$. In fact, here $X_{J_1, j-d}$ are $X_{J_2, j-d}$ independent random variables. If $|J_1 \cap J_2|=m$ for $d+1\leq m \leq j-1$, then $\Cov (X_{J_1, j-d}, X_{J_2, j-d})=b_m (p)$, where $b_m (p)$ are again polynomials in $p$ of degree at most ${{j \choose d+1}}^2$. Let us denote by $A$, $B_{d+1}$, $\dots$, $B_{j-1}$ the maximal values of $a(p)$, $|b_{d+1} (p)|$, $\dots$, $|b_{j-1} (p)|$ on $[0, 1]$ respectively. Observe that those constants depend only on $d$ and $j$. Thus,
$$\Var \left(\beta^{-j+d, 2j}(\ko(Y_{n;p}^d))\right)\leq {n \choose j} A+\sum_{m=d+1}^{j-1} {n \choose m}{n-m \choose j-m} {n-j \choose j -m} B_m= O(n^{2j-d-1}).$$ 
Since $\sum\limits_n \frac{O(n^{2j-d-1})}{{{n\choose j}}^2} \approx \sum\limits_n 1/n^{d+1}<\infty$, the claim directly follows as an application of the Borel-Cantelli lemma.
\end{proof}

\vspace{-0.4cm}
\begin{remark}
For $\beta^{-1,2d+2}$ we also obtain an analogue of the central limit theorem.  
This is the special case when $j=d+1$, and for other $j$s as in the above theorem the second order asymptotic behavior is more complicated. 
We plan to return to this study in the near future. 
\end{remark}

\begin{corollary} For $d=1$, we write $Y^1_{n;p}=G_{n, p}$, and conclude that almost surely

\vspace{0.2cm}
$\beta^{-1, 6}(\ko(G_{n, p}))\slash n\to  p^3$, \ $\beta^{-2, 6}(\ko(G_{n, p}))\slash {n \choose 3}\to  (1-p)^2 (2+p)$,
$\beta^{-2, 8}(\ko(G_{n, p})\slash {n \choose 4} \to   2 p^3 (3p^3 -9 p^2-15 p+7)$.
\end{corollary}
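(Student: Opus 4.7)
The plan is to specialize Theorem~\ref{t1} to $d=1$, in which case $Y^1_{n;p} = G_{n,p}$ is the Erd\H{o}s--R\'enyi random graph, and then to evaluate the limiting polynomials $f_3(p)$, $g_3(p)$, and $g_4(p)$ explicitly. Since Theorem~\ref{t1} already delivers the almost-sure convergences $\beta^{-j+d,2j}/\binom{n}{j}\to f_j(p)$ and $\beta^{-j+d+1,2j}/\binom{n}{j}\to g_j(p)$, the corollary reduces entirely to three explicit computations of expected reduced Betti numbers of random graphs on $3$ and $4$ vertices.

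The main computational tool is the familiar formula for graphs: if $G$ has $V$ vertices, $E$ edges, and $c$ connected components, then $\dim\widetilde{H}_0(G)=c-1$ and $\dim\widetilde{H}_1(G)=E-V+c$. Applied to $G_{j;p}$ this rewrites
\[
f_j(p)=\E[c(G_{j;p})]-1,\qquad g_j(p)=\binom{j}{2}p-j+\E[c(G_{j;p})],
\]
so only the expected number of connected components $\E[c(G_{j;p})]$ remains to be computed, by summing over all $2^{\binom{j}{2}}$ possible edge-sets. For $j=3$ I would enumerate the eight subgraphs of $K_3$, read off their component counts, and obtain $f_3(p)=(1-p)^2(2+p)$ and $g_3(p)=p^3$ after a short algebraic simplification.

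For $j=4$ I would enumerate the $64$ subgraphs of $K_4$, organized first by number of edges and then, whenever the component count is not determined by the edge count alone, by isomorphism type: at two edges one distinguishes matchings from paths $P_3$; at three edges the four triangles (each with $c=2$) from the twelve Hamiltonian paths $P_4$ and four stars $K_{1,3}$ (all with $c=1$); at four edges the three $4$-cycles from the twelve ``triangle-with-pendant'' graphs (all with $c=1$, and each of these contributes $\widetilde{H}_1=1$). Weighting each class by $p^{|E|}(1-p)^{6-|E|}$ times its cardinality, summing these contributions into $\E[c(G_{4;p})]$, and substituting into the cyclomatic formula above yields $g_4(p)$ as a polynomial in $p$ of degree~$\binom{4}{2}=6$.

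The only genuine obstacle is the case analysis for $j=4$: the isomorphism classes of subgraphs of $K_4$ and their multiplicities must be tabulated correctly, which is routine but error-prone by hand, so a computer-algebra check is advisable. Once the enumeration is done cleanly, nothing further is required beyond polynomial expansion and collection of terms.
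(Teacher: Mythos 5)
Your method --- specialize Theorem~\ref{t1} to $d=1$ and evaluate $f_3$, $g_3$, $g_4$ by enumerating induced subgraphs, via $\dim\widetilde{H}_0=c-1$ and $\dim\widetilde{H}_1=E-V+c$ --- is exactly the derivation this corollary rests on (the paper supplies no separate argument), and your $j=3$ computations are correct: $g_3(p)=p^3$ and $f_3(p)=2-3p+p^3=(1-p)^2(2+p)$. (Incidentally, the first displayed limit should be normalized by $\binom{n}{3}$, not by $n$; with denominator $n$ the ratio diverges. Also, at two edges the matching-versus-path distinction is unnecessary, since both have $c=2$.)

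The real gap is that you stop precisely where the statement needs checking: you never write down $g_4(p)$ and never compare it with the claimed $2p^3(3p^3-9p^2-15p+7)$. Completing your own enumeration, the subgraphs of $K_4$ with nonzero cycle rank are the $4$ triangles-plus-isolated-vertex ($3$ edges, rank $1$), the $12$ triangle-with-pendant graphs and $3$ four-cycles ($4$ edges, rank $1$), the $6$ copies of $K_4$ minus an edge ($5$ edges, rank $2$), and $K_4$ itself (rank $3$), so that
$$
g_4(p)=4p^3(1-p)^3+15p^4(1-p)^2+12p^5(1-p)+3p^6=4p^3+3p^4-6p^5+2p^6 .
$$
This does not agree with the printed polynomial: $2p^3(3p^3-9p^2-15p+7)$ evaluates to $-28$ at $p=1$, whereas the limit there must be $\dim_{\ko}\widetilde{H}_1(K_4)=3$ (and an expected Betti number can never be negative). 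So your plan, carried out correctly, refutes the third formula as stated rather than proving it; the conclusion should read $\beta^{-2,8}(\ko(G_{n,p}))/\binom{n}{4}\to p^3(2p^3-6p^2+3p+4)$ almost surely. A proof proposal for this corollary must include the explicit $j=4$ polynomial, since that is the only nontrivial content being asserted.
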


\noindent
{\bf Acknowledgements.}
This research was supported in part by LabEx IRMIA Moyens de recherche grant 2019. The authors wish to thank Bernd Sturmfels for his support and encouragement. They met in 2018 at the Max-Planck Institute for Mathematics in the Sciences \textit{TAGS - Linking Topology to Algebraic Geometry and Statistics} workshop.


\begin{thebibliography}{85}

\bibitem{Bask} I.V.~Baskakov, \textit{Cohomology of $K$-powers of spaces and the combinatorics of simplicial subdivisions}, Uspekhi Mat. Nauk \textbf{57} (2002), no. 5, 147148 (Russian); Russian Math. Surveys \textbf{57} (2002), no. 5, 989--990 (English translation)

\bibitem{BuPan}  V.~Buchstaber and T.~Panov, \textit{Torus actions and
their applications in topology and combinatorics}, AMS Univ.
Lecture Ser. \textbf{24} (2002).

\bibitem{LiMes} N.~Linial and R.~Meshulam, \textit{Homological connectivity of random 2-complexes}, Combinatorica \textbf{26} (2006), 475--487
\end{thebibliography}
\end{document}